\newtheorem{te}{Theorem}[section]
\newtheorem{example}{Example}
\newtheorem{os}[te]{Remark}
\newtheorem{prop}[te]{Proposition}
\numberwithin{equation}{section}
\begin{document}
	
	\title[]{A non-homogeneous generalization of Burgers equations.  }
\author{Francesco Maltese}
\maketitle

\begin{abstract}
  In this article we study generalizations of the inhomogeneous Burgers equation. First at the operator level, in the sense that we replace classical differential derivations by operators with certain properties, and then we increase the spatial dimensions of the Burgers equation, which is usually studied in one spatial dimension. This allows us, in one dimension, to find mathematical relationships between solutions of hyperbolic Brownian motion and the Burgers equations, which usually study the behaviour of mechanical fluids, and also, through appropriate transformations, to obtain in some cases exact solutions that depend on Hermite polynomials composed of appropriate functions.  In the multi-dimensional case, this generalization allows us, by means of the method of invariant spaces, to find exact solutions on Riemannian and pseudo-Riemannian varieties, such as Schwarzschild and Ricci Solitons space, with time dictated by fractional derivatives, such as a Caputo-type operator of fractional evolution.
	
	\bigskip
	
	\textit{Keywords: Invariant spaces, Burgers equations, Hermite polynomials, Fractional derivatives, Transformations, Riemannian varieties, Pseudo-Riemannian varieties, Caputo-type operator, Exact solutions, Brownian Motion. }  
\end{abstract}
\maketitle

\section{Introduction}

In this paper we consider a generalization of the non-homogeneous Burgers type equation 

\begin{equation}
	\frac{\partial u}{\partial t}-A(t)\frac{\partial u^{2}}{\partial x}=\frac{\partial^{2} u}{\partial x^{2}}+A(t)u \hspace{0.2 cm} with  \hspace{0.2 cm} u=u(x,t)
\end{equation}

as previously addressed in [6], in an operator-like manner, that is, instead of the usual partial derivative operators that appear in equation (1.1) we substitute for $\frac{\partial }{\partial t}$ a generic $\mathcal{O}_t$ linear differential operator acting on the time variable $t$ and instead of the double derivative $\frac{\partial^{2}}{\partial x^{2}} $ we put two generic differential operators $N_x$ and $M_x$ linear, acting only on $x$-variable, combined in this way $N_{x}M_{x} $.

And finally, we want to substitute instead of the contribution $\frac{\partial u^{2}}{\partial x} $ the expression $M_{x}(uL(u))$ or  $N_{x}(uL(u))$ where $L$ is a linear operator acting only on $x$-variable ,  so that (1.1) becomes 

\begin{equation}
(a)\hspace{0.1cm}	\mathcal{O}_{t}u-A(t)M_{x}(uL(u))=N_{x}M_{x}u+A(t)u \hspace{0.2cm} or \hspace{0.2cm} (b) \hspace{0.1cm}\mathcal{O}_{t}u-A(t)N_{x}(uL(u))=N_{x}M_{x}u+A(t)u.
\end{equation} 

Equation (1.2) when the above operators are varied represents a class of equations where (1.1) becomes a special case of it if $M_{x}=N_{x}=\frac{\partial }{\partial x}$ and $\mathcal{O}_t=\frac{\partial }{\partial t}$ and $L$ is the identity operator.
\quad
Later in the following sections we will add appropriate assumptions to the operators just described in such a way as to find mathematical methods for solving equation (1.2); one such method, inspired by Vitanov's methods in [1], will consist of finding solutions to (1.2) by an appropriate transformation of the solutions to the linear equations (2.1) and this is covered in section 2.The other method, covered in section 3,  makes use of the invariant spaces used by Gazizov and Kasaktin in [2], which consists in transforming non-linear differential equations into linear ones by linear combinations of functions that remain invariant under the action of these equations(see appendix in section 5 ). Then again, in the last part of section 3, we will combine the two methods used in sections 2 and 3 to determine other solutions of (2.1) via the solutions (1.2) obtained by the invariant space method. At each solution method then we will find explicit examples of equations of the type (1.2) with admissible solutions.
In the last section we will make a further generalization about spatial dimensions by transforming (1.2) into (4.1). This spatial generalization allows us to introduce in (4.1) the Laplace-Beltrami operator, applied in hyperbolic space from its general form in [7], and then to determine the solutions of (4.1) on Riemannian and pseudo-Riemannian varieties. Furthermore, thanks to the method of invariant spaces, we can introduce into (4.1) the generalised Caputo fractional derivative, which is also used in [7]. Furthermore, (4.1) can be applied to Riemannian varieties whose metrics change in time thanks to the Ricci-Hamilton flow equation (see [8]), in particular we will apply it to some Ricci Solitons (see (4.31)).

\section{On the Obtaining Solutions of a non-homogeneous operator equation of Burgers type by Means of the Solutions of linear Differential operatorial Equations }

\quad

Consider the linear differential operator equation 
\begin{equation}
		\mathcal{O}_{t}\psi=M_{x}N_{x}\psi \hspace{0.2 cm} with  \hspace{0.2 cm} \psi=\psi(x,t)
\end{equation}
 with $\mathcal{O}_{t}, N_{x}, M_{x} $ the differential operators described in section 1 . As in Vitanov's article (for details see [1]) we want to find transformations that map any solutions of (2.1) into the solutions of (1.2) and as anticipated before for this purpose we add the following assumptions about the differential operators involved in (2.1)
 
 \begin{equation}
 	\mathcal{O}_{t}, N_{x},  M_{x}\hspace{0.1 cm} enjoy \hspace{0.1 cm}the\hspace{0.1 cm} Leibniz \hspace{0.1 cm}property \hspace{0.1 cm} for\hspace{0.1 cm}  for,\hspace{0.1 cm} respectively\hspace{0.1 cm} the\hspace{0.1 cm} variable\hspace{0.1 cm} t\hspace{0.1 cm} and\hspace{0.1 cm} x\hspace{0.1 cm} 
 \end{equation}

 \begin{equation}
 	\left[\mathcal{O}_{t}, M_{x} \right]=\left[\mathcal{O}_{t}, N_{x} \right]=0
 \end{equation}

\begin{equation}
M_{x}=LN_{x} \hspace{0.1 cm} with \hspace{0.1 cm} L\hspace{0.1 cm}the \hspace{0.1 cm} linear\hspace{0.1 cm} operator\hspace{0.1 cm} involved\hspace{0.1 cm} in\hspace{0.1 cm} equation \hspace{0.1 cm} (1.2)
\end{equation}

\begin{equation}
	\mathcal{O}_{t}A=-A^{2} \hspace{0.1 cm} and\hspace{0.1 cm} A \neq 0\hspace{0.1 cm} for\hspace{0.1 cm} each\hspace{0.1 cm} t \neq 0,\hspace{0.1 cm} with\hspace{0.1 cm}A=A(t)\hspace{0.1 cm} the\hspace{0.1 cm} coefficient\hspace{0.1 cm} function\hspace{0.1 cm} involved \hspace{0.1 cm}in\hspace{0.1 cm} (1.2) 	
\end{equation}	
	
\quad	
As a matter of synthesis we will merge the assumptions (2.2), (2.3), (2.4), (2.5) just listed by denoting them by the symbol H(2.1).

\quad

We now proceed to find a transformation by proving the following assertion

\begin{prop}
If the assumptions H(2.1) hold, and if there exists a function $ g= g(x,t)$ invertible  such that, given a function $u=u(x,t)$ 

\begin{equation}
	\mathcal{O}_{t}g(A(t)(N_{x})^{-1} u)=g(A(t)(N_{x})^{-1} u) \mathcal{O}_{t}(A(t)(N_{x})^{-1} u)
\end{equation}

\begin{equation}
	 M_{x}g(A(t)(N_{x})^{-1} u)=g(A(t)(N_{x})^{-1} u) M_{x}(A(N_{x})^{-1} u )\hspace{0.1cm} and\hspace{0.1cm}  N_{x}g(A(t)(N_{x})^{-1} u)=g(A(t)(N_{x})^{-1} u) N_{x}(A(t)(N_{x})^{-1} u )
\end{equation}
 Then $u=\frac{1}{A(t)}N_{x} g^{-1}(\psi(x,t))$ is a solution of (1.2)(b) if a solution $\psi=\psi(x,t)$ of (2.1) can be written as $\psi(x,t)=g(A(t)(N_{x})^{-1} u)$.
\end{prop}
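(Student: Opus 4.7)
The plan is to substitute the ansatz $\psi(x,t) = g(A(t) N_x^{-1} u)$ directly into the linear equation (2.1), and use the hypotheses H(2.1) together with (2.6)--(2.7) to reduce it to the nonlinear equation (1.2)(b). Write $v := A(t) N_x^{-1} u$, so that the ansatz reads $\psi = g(v)$ and the stated conclusion becomes $u = A^{-1} N_x g^{-1}(\psi)$.

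First I would evaluate the two sides of $\mathcal{O}_t \psi = M_x N_x \psi$. By hypothesis (2.6) the left-hand side equals $g(v)\, \mathcal{O}_t v$. For the right-hand side, applying (2.7) and then the Leibniz property of $M_x$ from (2.2) gives
\[
M_x N_x g(v) = M_x\bigl(g(v) \cdot N_x v\bigr) = g(v)\bigl(M_x v \cdot N_x v + M_x N_x v\bigr).
\]
Cancelling the common factor $g(v)$, which is legitimate since $g$ is invertible and so nonzero, produces the intermediate identity $\mathcal{O}_t v = M_x v \cdot N_x v + M_x N_x v$.

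Next I would unpack each term in terms of $u$. Using Leibniz for $\mathcal{O}_t$ from (2.2), the commutation $[\mathcal{O}_t, N_x] = 0$ from (2.3) (which also yields $[\mathcal{O}_t, N_x^{-1}] = 0$), and the relation $\mathcal{O}_t A = -A^2$ from (2.5), one gets $\mathcal{O}_t v = -A^2 N_x^{-1} u + A N_x^{-1} \mathcal{O}_t u$. Because $A = A(t)$ while $N_x, M_x$ act only on the variable $x$, it further follows that $N_x v = A u$, and with $M_x = L N_x$ from (2.4) also $M_x v = A L(u)$ and $M_x N_x v = A M_x u$. Substituting into the intermediate identity and then applying $N_x$ to both sides (again permitted by the commutation identities and the Leibniz rule for $N_x$) yields
\[
-A^2 u + A\,\mathcal{O}_t u = A^2 N_x\bigl(u L(u)\bigr) + A\, N_x M_x u.
\]
Dividing through by $A \neq 0$, guaranteed by (2.5), and rearranging gives precisely (1.2)(b).

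The main obstacle is bookkeeping rather than conceptual: one must consistently verify that $N_x$ and $\mathcal{O}_t$ commute with multiplication by the $t$-dependent coefficient $A(t)$, that $[\mathcal{O}_t, N_x^{-1}]$ indeed vanishes as an operator identity on the relevant function space, and that the Leibniz rule is applied cleanly at each step where $g(v)$ is pushed through $M_x$ or $N_x$. Once these routine identities are in place the reduction is mechanical.
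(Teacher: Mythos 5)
Your proof is correct and follows essentially the same route as the paper's: substitute $\psi=g(A(t)N_x^{-1}u)$ into (2.1), use (2.6)--(2.7) and the Leibniz rule to cancel the factor $g(v)$ and obtain the intermediate identity $\mathcal{O}_t v = M_xv\cdot N_xv + M_xN_xv$, then unpack $v=A(t)N_x^{-1}u$ via (2.4), (2.5) and the commutation (2.3) and apply $N_x$ to arrive at (1.2)(b). The only difference is cosmetic ordering (you commute $\mathcal{O}_t$ past $N_x^{-1}$ early and identify $M_xN_x^{-1}=L$ immediately, where the paper defers both to the end), which does not change the argument.
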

	\begin{proof}
		
		By hypothesis
		
		$$ \mathcal{O}_{t}\psi=M_{x}N_{x}\psi             $$
		
	becomes
	
	$$  \mathcal{O}_{t} g(A(t)(N_{x})^{-1} u) =M_{x}N_{x} g(A(N_{x})^{-1} u)                     $$
	
	$$  \psi \mathcal{O}_{t}(A(t)(N_{x})^{-1} u)= M_{x}(\psi N_{x}(A(t)(N_{x})^{-1} u ) )                                                $$
	
	$$  \psi \mathcal{O}_{t}(A(t)(N_{x})^{-1} u)=  \psi M_{x}N_{x} (A(t)(N_{x})^{-1} u)+ \psi M_{x}(A(t)(N_{x})^{-1} u ) N_{x}(A(t)(N_{x})^{-1} u ).                                           $$
\quad
Now we divide both members of the equation by $\psi$ so we get

$$\mathcal{O}_{t}(A(t)(N_{x})^{-1} u)=M_{x}N_{x} (A(t)(N_{x})^{-1} u)  +   M_{x}(A(t)(N_{x})^{-1} u ) N_{x}(A(t)(N_{x})^{-1} u )                                      $$

$$  A(t)\mathcal{O}_{t}((N_{x})^{-1} u)  + \mathcal{O}_{t}(A(t))(N_{x})^{-1}u=A(t)M_{x}N_{x}(N_{x})^{-1} u +A^{2}(t)M_{x}(N_{x})^{-1} u N_{x}(N_{x})^{-1} u                                            $$

\begin{equation}
 A(t)\mathcal{O}_{t}((N_{x})^{-1} u)  + \mathcal{O}_{t}(A(t))(N_{x})^{-1}u=A(t)M_{x} u +A^{2}(t)(M_{x}(N_{x})^{-1} u) u .                                  
\end{equation}
 \quad
 By (2.5) of the hypothesis H(2.1) we have that
 $\mathcal{O}_{t}A(t)=-A(t)^{2}$ so (2.8) becomes
 
 $$  A(t)\mathcal{O}_{t}((N_{x})^{-1} u)   -A^{2}(t)(N_{x})^{-1}u=A(t)M_{x} u +A^{2}(t)(M_{x}(N_{x})^{-1} u) u  $$

 and dividing both members of (2.8) by A(t), we get
  
  \begin{equation}
    \mathcal{O}_{t}((N_{x})^{-1} u)   -A(t)(N_{x})^{-1}u=M_{x} u +A(t)(M_{x}(N_{x})^{-1} u) u                                        
\end{equation}
  
furthermore using hypothesis (2.7), we have that $L=M_{x} (N_{x})^{-1}$ and (2.9) becomes

\begin{equation}
 \mathcal{O}_{t}((N_{x})^{-1} u)   -A(t)(N_{x})^{-1}u=M_{x} u +A(t)(L u) u .                           
\end{equation}
By deriving both members of (2.9) for the operator $N_{x}$ and taking advantage of assumption (2.3) of assumptions H(2. 1) i.e., $ \left[\mathcal{O}_{t}, N_{x} \right]=0$ (a nontrivial assumption since it might be the case that the time operator $\mathcal{O}_{t}$ might depend on the spatial $x$-coordinate), (2.10) becomes

$$  \mathcal{O}_{t}u   -A(t)u=N_{x}M_{x} u +A(t)N_{x}((L u) u ) .                                                                  $$

Which becomes (1.2)(b)

\begin{equation}
 \mathcal{O}_{t}u   - A(t)N_{x}( u(L u) )                                         =N_{x}M_{x} u +A u .                                                           
\end{equation}

The solution $u$ of (2.11) is derived from the relation 
$\psi(x,t)= g(A(t)(N_{x})^{-1} u)$ by solving it with respect to $u$ and thus the thesis is proved.

\end{proof}

\quad

Still maintaining the same assumptions of H(2.1) and of (2.6) and (2.7), we can start from equation (2.11) and proceed in the opposite direction to obtain the $\psi$ solution of equation (2.1) from $u=\frac{1}{A(t)}N_{x} g^{-1}(\psi)$ 

\begin{prop}
If the assumptions H(2.1) hold, and if there exists a function $ g= g(x,t)$ invertible  such that if 	$u=\frac{1}{A}N_{x} g^{-1}(\psi)$ , with $\psi=\psi(x,t)$, it is a solution of (2.11) for which we have that

\begin{equation}
	\mathcal{O}_{t}(\psi)=\psi \mathcal{O}_{t}(A(t)(N_{x})^{-1} u)
\end{equation}

\begin{equation}
	N_{x}\psi=\psi N_{x}(A(t)(N_{x})^{-1} u )\hspace{0.1cm} and\hspace{0.1cm}M_{x}\psi=\psi M_{x}(A(t)(N_{x})^{-1} u )
\end{equation}

Then $\psi$ is a solution of equation (2.1) in particular $\psi$ is written as $\psi(x,t)= g(A(t)(N_{x})^{-1} u)$

\end{prop}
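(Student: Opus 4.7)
The plan is to run the proof of Proposition 2.1 backwards, starting from (2.11) and arriving at (2.1). Writing $\Phi := A(t)(N_x)^{-1}u$ to lighten notation, the inversion formula $u=\tfrac{1}{A(t)}N_x g^{-1}(\psi)$ is the same as $\psi=g(\Phi)$, which gives the ``in particular'' clause for free; moreover, under the identification $\psi=g(\Phi)$ the hypotheses (2.12)--(2.13) are exactly (2.6)--(2.7) of Proposition 2.1, so the present statement is really the logical converse with respect to the same structural data.

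First, I would apply $(N_x)^{-1}$ to both sides of (2.11). Since $[\mathcal{O}_t,N_x]=0$ by (2.3) and $N_x$ is invertible, the inverse commutes with $\mathcal{O}_t$ as well; together with (2.4) in the form $L=M_x(N_x)^{-1}$, this yields exactly equation (2.10). Multiplying by $A(t)$ and using (2.5) to replace $-A^{2}(N_x)^{-1}u$ by $\mathcal{O}_t(A)\cdot(N_x)^{-1}u$, the Leibniz property (2.2) for $\mathcal{O}_t$ collapses the left-hand side into $\mathcal{O}_t(\Phi)$, while the identities $N_x\Phi=A u$ and $M_x\Phi=A\,Lu$ rewrite the right-hand side as $M_xN_x\Phi + M_x(\Phi)\,N_x(\Phi)$. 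At this point I would use (2.13) in the form $M_x\psi=\psi M_x\Phi$ to rewrite $M_x(\Phi)N_x(\Phi)$ as $\psi^{-1}M_x(\psi)\,N_x(\Phi)$, multiply the whole equation by $\psi$, and run the Leibniz rule (2.2) for $M_x$ in reverse to fuse the right-hand side into $M_x\bigl(\psi N_x\Phi\bigr)$. A second application of (2.13), namely $\psi N_x\Phi=N_x\psi$, converts this to $M_xN_x\psi$, while (2.12) converts the left-hand side $\psi\,\mathcal{O}_t\Phi$ into $\mathcal{O}_t\psi$; the resulting equation is precisely (2.1).

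The only genuine subtlety, rather than a real obstacle, is the use of $(N_x)^{-1}$ and of $[\mathcal{O}_t,(N_x)^{-1}]=0$: one needs $N_x$ to be invertible on the function space under consideration, after which the commutator identity follows automatically from (2.3) by the standard argument. Beyond this, the proof is bookkeeping that mirrors the forward argument of Proposition 2.1 step for step, each algebraic move being justified by the same hypothesis in H(2.1) as before, only read in reverse.
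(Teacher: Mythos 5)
Your proof is correct and takes essentially the same route as the paper, whose own proof consists of the single instruction to substitute $u=\tfrac{1}{A(t)}N_{x}g^{-1}(\psi)$ into (2.11) and reverse the computation of Proposition 2.1 using (2.12)--(2.13); your write-up is simply that reversal carried out explicitly. You are also more careful than the paper in flagging the one real subtlety, namely that undoing the final $N_{x}$-differentiation step requires $N_{x}$ to be invertible (with $[\mathcal{O}_t,(N_x)^{-1}]=0$ and no kernel ambiguity), a point the paper passes over in silence.
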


\begin{proof}
	In the demonstration it is enough to substitute $u=\frac{1}{A(t)}N_{x} g^{-1}(\psi)$ into (2.11) and use the assumptions of the theorem . Moreover, the relation between $u$ and $\psi$ can be seen as a generalization of the Cole- Hopf-Type transform, so the proof can be seen as a generalization of the calculation to solve nonlinear nonhomogeneous Burgers equation (1.1) ( to see such calculation in a little more detail refer to page 2 of [6] )
\end{proof}

\subsection{Subfamily of solutions of Burgers-type operator equations for $L=\mathbb{Id}$ and generalized Hermite polynomials}

\quad

Let us begin with the case in which in equation (1.2) we have that in addition to the fact that $L=\mathbb{Id}$ the spatial differential operators are equal i.e. $N_{x}=M_{x}$ in which case (2.1) becomes

\begin{equation}
	 \mathcal{O}_{t}u   - A(t)N_{x} ( u^{2}    )                                      =N_{xx}u +A(t) u .                    
\end{equation}

Where by $N_{xx}$ we mean to use the operator $N_{x}$ twice i.e. $N_{xx}=N_{x}N_{x}$.

\quad

If the assumptions of proposition (2.1) hold true, then a possible solution of (2.14) is $u(x,t)=\frac{1}{A(t)}N_{x} g^{-1}(\psi(x,t))$ where $ \psi(x,t)$ for (2.1) is solution of the operator equation of the heat-type equation

\begin{equation}
	\mathcal{O}_{t}\psi=N_{xx}\psi
\end{equation}

with an initial condition for an appropriate value of the variable $t$. This value is fixed at $t_{1}$ such that $\psi(x,t_{1})=f(x)^{n}$ for some integer $n>1$ with $f(x)$ a fixed function.
If the function $f(x)$ verifies that $N_{x}f(x)=1$ and there exists a function $h(t)$ such that $\mathcal{O}_{t} h(t)=1$ and $h(t_{1})=0$. Then a possible solution of (2.15) is a generalized Hermite polynomial i.e. 

\begin{equation}
H_{n}(f(x),h(t))=n!\sum_{r=0}^{[\frac{n}{2}]}	\frac{h(t)^{r}f(x)^{n-2r}}{(n-2r)!r!}
\end{equation}
In that case then a possible solution of (2.14) will be

\begin{equation}
	u(x,t)=\frac{1}{A(t)}N_{x} g^{-1}(H_{n}(f(x),h(t)))
\end{equation}

where we recall that the function $g$ is an appropriate function that satisfies the assumptions of proposition (2.1).

\quad

We introduce two possible examples of (2.14) in the hyperbolic Poincairé half-plane in the polar coordinates of a hyperbolic surface. First, we will use the coordinate of the arc length of geodesics $\eta$ (for details on how to derive these coordinates and on the hyperbolic half-plane of Poincairé see [7])

\begin{example}
	
\quad
	
With following differential operators $N_{\eta}=\sinh \eta \partial_{\eta}$ and $ \mathcal{O}_{t}=\partial_{t}$  the equation (2.14) with such differential operators becomes
	
\begin{equation}
	\partial_{t}u-A(t)\sinh \eta \partial_{\eta}(u^{2})=sinh \eta \partial_{\eta}sinh \eta \partial_{\eta} u +A(t)u
\end{equation}	
	
 With following function $g(\xi(\eta,t))=e^{\xi(\eta,t)}$	the differential operators just described not only verify hypotheses H(2.1) but also verify hypotheses (2.6) and (2.7) in fact
 
 $$N_{\eta}g(\xi(\eta,t))=\sinh \eta \partial_{\eta}(e^{\xi(\eta,t)})$$
 
 $$=\sinh \eta e^{\xi(\eta,t)}\partial_{\eta}\xi(\eta,t)=e^{\xi(\eta,t)}\sinh \eta \partial_{\eta}\xi(\eta,t)=e^{\xi(\eta,t)}N_{\eta}\xi(\eta,t)$$
 
 $$ \mathcal{O}_{t}g(\xi(\eta,t))=\partial_{t}e^{\xi(\eta,t)}=e^{\xi(\eta,t)}\partial_{t}\xi(\eta,t)=e^{\xi(\eta,t)}\mathcal{O}_{t}\xi(\eta,t).$$
 
 So starting from (2.15), which in this context becomes
 
 \begin{equation}
 	\partial_{t}\phi=\sinh \eta \partial_{\eta}\sinh \eta \partial_{\eta} \phi
 \end{equation}
 
By means of the transformation $\psi(\eta,t)= g(A(t)(N_{\eta})^{-1} u(\eta,t))$ ,which becomes in this case $ \psi(\eta,t)=e^{A(t)\int_{\eta_{0}}^{\eta}\frac{u(s,t)}{\sinh s}\, ds }$, where $\eta_{0}$ is a suitably fixed coordinate, we will go by proposition (2. 1) to equation (2.18). So for what we have seen so far a possible solution of (2.18) is

\begin{equation}
	u(\eta,t)=\frac{1}{t-t_{0}}\sinh \eta\partial_{\eta}\ln(H_{n}(\ln \tanh \frac{\eta}{2}, t))=\frac{1}{t-t_{0}}n\frac{ H_{n-1}(\ln \tanh \frac{\eta}{2}, t)}{H_{n}(\ln \tanh \frac{\eta}{2}, t)}. 
\end{equation} 
	Where we have that $H_{n}(\ln \tanh \frac{\eta}{2}, t)$ is the generalized Hermite polynomial that solves equation (2.19) with the initial condition $\psi(\eta,0)=(\ln \tanh \frac{\eta}{2})^{n}$ for some fixed integer $n$. In fact, $N_{\eta}ln \tanh \frac{\eta}{2}=\sinh \eta \partial_{\eta}   \ln \tanh \frac{\eta}{2}=1$ and obviously $\mathcal{O}_{t} t=\partial_{t} t=1$ with $t_{1}=0$.
	Moreover, the function $A(t)=\frac{1}{t-t_{0}}$ solves condition (2.5) of the list of hypotheses H(2.1), since $\mathcal{O}_{t} A(t)=-A^{2}(t)$ becomes in that case $\partial_{t}A(t)=-A^{2}(t)$ 
	
\end{example}

\quad
\begin{example}
	In the second example, we introduce the following operators $$N_{\eta}=\frac{1}{\sinh \eta } \partial_{\eta}$$ 
	$$\mathcal{O}_{t}=\partial_{t}$$. Furthermore, as function $g$ of proposition (2.1) we choose the following function $g(\xi(\eta,t))=e^{\xi(\eta,t)}$. Then equation (2.14) in that case becomes.
	
	\begin{equation}
			\partial_{t}u-A(t)\frac{1}{\sinh \eta }  \partial_{\eta}(u^{2})=\frac{1}{\sinh \eta }\partial_{\eta}\frac{1}{\sinh \eta } \partial_{\eta} u +A(t)u.
	\end{equation}
Taking advantage of the assumptions of proposition (2.1) which are in that case satisfied, to solve (2.21) we will start from (2.15) which in that case becomes

\begin{equation}
\partial_{t}\psi=\frac{1}{\sinh \eta } \partial_{\eta}\frac{1}{\sinh \eta } \partial_{\eta} \psi	
\end{equation}
and also here, we make use of the transformation $\psi(\eta,t)= g(A(N_{\eta})^{-1} u(\eta,t))$ which in that case becomes $\psi(u)=e^{A\int_{\eta_{0}}^{\eta}u(s,t)\sinh s\, ds }$, where $\eta_{0}$ is a suitably fixed coordinate. 

\quad
Then a possible solution of (2.21) is obtained by the same method as in Example 1 by means of an appropriate Hermite polynomial for some positive integer $n$ which in this case will be $H_{n}(\cosh \eta, t)$, and it is of the type $$	u(\eta,t)=\frac{1}{t-t_{0}}\frac{1}{\sinh \eta} \partial_{\eta}\ln(H_{n}(\cosh\eta, t))=\frac{1}{t-t_{0}}n\frac{ H_{n-1}(\cosh \eta, t)}{H_{n}(\cosh \eta, t)} .$$

\end{example}

\subsection{Burgers-type operator equations in the case $L \neq \mathbb{I}$}

\quad

We introduce an example in the case $L \neq \mathbb{I}$ on the hyperbolic half-plane of Poincairé . Interestingly, this case relates the solution of (1.2) in this case to a well-known solution of Brownian motion on a hyperbolic half-plane ( for details refer to [3] (1.5), (1.6), (1.7), (2.14), (2.15)).

\begin{example}
	In the hyperbolic half-plane of Poincairé we introduce the following linear operator $L=(\frac{1}{\sinh \eta })^{2}\cdot \mathbb{I}$ and the three differential operators $N_{\eta}=\sinh \eta \partial_{\eta}$ and $M_{\eta}=\frac{1}{\sinh \eta }\partial_{\eta}$, $\mathcal{O}_{t}=\partial_{t}$ to transform (1.2) (b).

\begin{equation}
	\partial_{t}u-A(t)\sinh \eta \partial_{\eta}\Bigl[\Bigl(\frac{1}{\sinh \eta }\Bigr)^{2} u^{2} \Bigr]=\sinh \eta \partial_{\eta}\frac{1}{\sinh \eta }\partial_{\eta}u(t) + A(t)u.
\end{equation}

Introducing as an invertible function the function used in the other two examples, namely $g(\xi(\eta, t))=e^{\xi(\eta,t)}$, we verify that with the operators illustrated above we have the assumptions of proposition (2.1). So we can find a possible solution for (2.23) via equation (2.1), which, , when subjected to the application of such operators, becomes

\begin{equation}
\partial_{t}\phi=\frac{1}{\sinh \eta }\partial_{\eta}\sinh \eta \partial_{\eta} \phi \hspace{0.2 cm}i.e. \hspace{0.2 cm} \partial_{t}\phi= \Delta_{H}\phi
\end{equation}

Where $\Delta_{H}$ is the hyperbolic Laplacian  and (2.24) is the differential equation of hyperbolic Brownian motion (for further details see [3] and also [5]).

\quad

By means of the transformation $ \phi(\eta,t)=e^{A(t)\int_{\eta_{0}}^{\eta}\frac{u(s,t)}{\sinh s}\, ds }$ (2.24) goes into (2. 23); therefore the solution of (2.23) will be of the type $ u(\eta,t)=\frac{1}{A(t)}\sinh \eta \partial_{\eta}ln(\phi(\eta,t))$ and since the solution of (2.24) is

\begin{equation}
	\phi(\eta,t)=\frac{e^{-(t/4)}}{\sqrt{\pi}(\sqrt{2t})^{3}}\int_{\eta}^{+ \infty}\frac{\psi e^{-(\psi^{2}/4t)}}{\sqrt{\cosh \psi-\cosh \eta}}\,d\psi.	
\end{equation}

\quad

then a possible solution, unless initial conditions are met, of (2.23) becomes
\begin{equation}
	 u(\eta,t)=\frac{1}{t-t_{0}}\sinh \eta\partial_{\eta}ln\Biggr(\frac{e^{-(t/4)}}{\sqrt{\pi}(\sqrt{2t})^{3}}\int_{\eta}^{+ \infty}\frac{\psi e^{-(\psi^{2}/4t)}}{\sqrt{\cosh \psi-\cosh \eta}}\,d\psi\Biggl).
\end{equation}

However, it should be emphasized to choose an appropriate domain of $\phi$ because in (2.26) it is an argument of the natural logarithm function $\ln(\cdot)$. Also as can be seen in [3] the initial condition of (2.24) is Dirac's $delta$ for $t=0$ i.e. $\phi(\eta,0)=\delta(\eta)$ and this quite problematic in (2.26) again due to the fact of the natural logarithm function. Therefore, it will be appropriate to choose a $t \neq 0$ that exhibits good behaviour in the initial condition of (2.24).
\end{example}

\quad

\section{Solving the Burgers-type operator equation via invariant spaces}

In this section we want to solve  (1.2) using the invariant subspaces method used in [2](see section 5) and justified theoretically by R.K .Gazizov and A.A Kasaktin. Nevertheless, we must recognize that our analysis is based on certain assumptions.
\quad

Suppose we have a $\mathbb{R}$-vector space of functions $W=<\omega_{l}(x)>_{l=0}^{s}$ for which the following properties hold

\begin{equation}
N_{x}(\omega_{l}(x))=M_{x}(\omega_{l}(x))=1 \hspace{0.2 cm}\forall l\neq 0 \hspace{0.2cm} and \hspace{0.2cm} N_{x}(\omega_{0}(x))=M_{x}(\omega_{0}(x))=0\hspace{0.1 cm} or\hspace{0.1 cm}M_{x}(\omega_{l}(x))=1 \hspace{0.2 cm}\forall l\neq 0 \hspace{0.2cm} and \hspace{0.2cm} M_{x}(\omega_{0}(x))=0
\end{equation}

\begin{equation}
	W \le Spec L(\lambda(t))\hspace{0.2 cm} where \hspace{0.2 cm}\lambda(t)\hspace{0.2 cm} is\hspace{0.2 cm} a\hspace{0.2 cm} function\hspace{0.2 cm} dependent\hspace{0.2 cm} on\hspace{0.2 cm} t
\end{equation}

\quad
In assumption (3.2) we are meaning that $W$ is a subspace of the spectrum of the linear operator $L$ of (1.2), of eigenvalue a certain function $\lambda(t)$ i.e. $W$ is contained in the eigenspace associated with the eigenvalue $\lambda(t)$ of $L$. 

Furthermore, if we add the following property 

\begin{equation}
N_{x},  M_{x}\hspace{0.1 cm} enjoy \hspace{0.1 cm}the\hspace{0.1 cm} Leibniz \hspace{0.1 cm}property \hspace{0.1 cm} with \hspace{0.1 cm} respect\hspace{0.1 cm} to\hspace{0.1 cm} the\hspace{0.1 cm} variable \hspace{0.1 cm}x.
\end{equation}

Note that (3.1) and (3.3) imply that $W \leq Ker N_{x}M_{x} $ this will be useful in computing the solution (1.2)(a) and (b) by the method of invariant spaces.

\quad

Due to the assumptions (3.1) , (3.2) and (3.3) we can look for solutions of (1.2) in the following functions

\begin{equation}
	u(x,t)=\sum_{l=0}^{s}b_{l}(t)\omega_{l}(x),
\end{equation}

where $b_{l}(t)$ are unknown functions in the variable $t$ to be determined for (3.4) to satisfy (1.2)(a) or (b) and this is the objective of the present study.

\quad
Recall what we have just derived from the hypotheses just assumed that $W \leq Ker N_{x}M_{x} $ . Furthermore, by (3.1) we have that $M_{x}(u)= \sum_{j=1}^{s}b_{j}(t)$ or $M_{x}(u)=N_{x}(u)=\sum_{j=1}^{s}b_{j}(t)$ and the equation (2.1) in both cases (a) and (b) can be expressed as follows, in accordance with (3.2) and (3.3):

$$ \mathcal{O}_{t}(u)-2A(t)\lambda(t)(\sum_{j=1}^{s}b_{j}(t))u= A(t)u .$$

\quad

Now to determine the differential system satisfying the coefficients $b_{l}(t)$ we write the (1.2) just simplified in terms of the solution (3.4) and determine the new coefficients of $\omega(x)$ that equaled $0$ will give rise to the new equations

$$\mathcal{O}_{t}(u)=A(t)(2\lambda(t) \sum_{j=1}^{s}b_{j}(t)+1)u $$
$$\mathcal{O}_{t}(\sum_{l=0}^{s}b_{l}(t)\omega_{l}(x))= A(t)(2\lambda(t) \sum_{j=1}^{s}b_{j}(t)+1)\sum_{l=0}^{s}b_{l}(t)\omega_{l}(x)  $$         

\begin{equation}                                                  
\sum_{l=0}^{s} \mathcal{O}_{t}(b_{l}(t))\omega_{l}(x)= \sum_{l=0}^{s}b_{l}(t)A(t)(2\lambda(t)\sum_{j=1}^{s}b_{j}(t)+1)  \omega_{l}(x) .                                               
\end{equation}

\quad

We can then establish that (3.4) is the solution of (1.2) by determining the functions $b_{l}(t)$ from (3.5), combining the assumptions (3.1), (3.2), (3.3) in the list called H(3.1) .

\begin{prop}
		If H(3.1) assumptions hold, then the function $$ u(x,t)=\sum_{l=0}^{s}b_{l}(t)\omega_{l}(x) $$ is a possible solution of (1.2) if $b_{l}(t)$ are solutions of the following differential system
		
		\begin{equation}
			\mathcal{O}_{t}(b_{l}(t))=A(t)b_{l}(t)[2\lambda(t)\sum_{j=1}^{s}b_{j}(t)+1]\hspace{0.2cm} \forall l \in \{0,...,j,...,s \}.
		\end{equation}
\end{prop}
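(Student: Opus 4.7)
The plan is to substitute the ansatz $u(x,t) = \sum_{l=0}^{s} b_l(t)\,\omega_l(x)$ directly into (1.2)(a) or (1.2)(b), then use the assumptions H(3.1) to reduce the PDE to a scalar identity living in $W$, and finally project onto the basis $\{\omega_l\}_{l=0}^{s}$ via linear independence to extract the claimed ODE system.

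First I would evaluate each term of (1.2) on the ansatz. By (3.1), $M_x(u) = \sum_{j=1}^{s} b_j(t)$, a function of $t$ only, since the $\omega_0$-contribution vanishes. Because $M_x,N_x$ enjoy the Leibniz property (3.3), any such differential operator annihilates constants in $x$, so $N_x M_x(u) = N_x\!\bigl(\sum_j b_j(t)\bigr)=0$; this kills the linear diffusion term on the right of (1.2). For the nonlinearity I would use (3.2): each $\omega_l \in \operatorname{Spec}_L(\lambda(t))$, so $L(u) = \lambda(t) u$ and hence $u L(u) = \lambda(t) u^{2}$. Applying Leibniz once more gives $M_x(u L(u)) = 2\lambda(t)\, u\, M_x(u) = 2\lambda(t)\, u \sum_{j=1}^{s} b_j(t)$, and the analogous identity for $N_x(u L(u))$ in case (b). Finally, since $\mathcal{O}_t$ acts only on $t$ it commutes past the $x$-dependent factor $\omega_l(x)$, yielding $\mathcal{O}_t u = \sum_{l} \mathcal{O}_t(b_l)\,\omega_l$.

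Assembling these pieces, (1.2) collapses to
$$\sum_{l=0}^{s} \mathcal{O}_t(b_l(t))\,\omega_l(x) \;=\; A(t)\Bigl[\,2\lambda(t)\sum_{j=1}^{s} b_j(t) + 1\,\Bigr] \sum_{l=0}^{s} b_l(t)\,\omega_l(x),$$
which is precisely (3.5). Using linear independence of $\{\omega_l\}_{l=0}^{s}$ in $W$, equating coefficients of each $\omega_l$ yields exactly the differential system (3.6). Since every simplification in the derivation is an identity, the implication runs both ways: assuming the $b_l$ solve (3.6), the ansatz automatically satisfies (1.2), which is the stated conclusion.

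The main obstacle I expect is not computational but bookkeeping: one has to match the correct clause of (3.1) to the correct version of (1.2). For case (b), where the nonlinearity involves $N_x(uL(u))$, the stronger clause $N_x(\omega_l)=M_x(\omega_l)=1$ is needed so that $N_x(u)=\sum_{j=1}^s b_j(t)$ is again a function of $t$ alone; for case (a) the weaker clause about $M_x$ alone suffices. A smaller subtlety is the step $N_x M_x u=0$, which is formally phrased in the excerpt as $W\le\ker(N_x M_x)$ but implicitly relies on Leibniz operators annihilating constants; I would spell this out explicitly rather than invoke it as a black box.
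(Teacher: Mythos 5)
Your proposal is correct and follows essentially the same route as the paper: the paper's derivation (carried out in the text immediately preceding the proposition) likewise uses (3.1) and (3.3) to get $W\le\ker N_{x}M_{x}$ and $M_{x}(u)=\sum_{j=1}^{s}b_{j}(t)$, uses (3.2) to replace $L(u)$ by $\lambda(t)u$, reduces (1.2) to $\mathcal{O}_{t}(u)=A(t)\bigl[2\lambda(t)\sum_{j=1}^{s}b_{j}(t)+1\bigr]u$, i.e.\ to (3.5), and then equates coefficients of the $\omega_{l}(x)$ to obtain (3.6). Your added remarks --- matching the clause of (3.1) to case (a) versus case (b), and spelling out why Leibniz operators annihilate the $x$-constant $\sum_{j}b_{j}(t)$ --- only make explicit what the paper leaves implicit.
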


\quad

Note that the assumptions H(3.1) do not involve the operator $\mathcal{O}_{t}$. Thus, the only assumption is that $\mathcal{O}_{t}$ is linear and differential with respect to functions of the type $f(t)$. There is an advantage in this, in that we have a chance to choose differential operators other than linear combinations of partial derivatives with respect to $t$. In fact, we can solve (3.6) by choosing as $\mathcal{O}_{t}$ a Caputo-type operator of fractional evolution $\mathcal{O}_{\beta, f}^{t}$ with $\beta>0 \in (0,1)$ a real number and $f(t)$ a increasing function of the type $L^{1}[0,t]$ fixed with $f(0)=0$ and ${ f}^{'}(t)\neq 0$ . That is

\begin{equation}
	(\mathcal{O}_{\beta, f}^{t}b)(t)=\frac{1}{\Gamma(n-\beta)}\int_{0}^{t}f^{'}(\tau)(f(t)-f(\tau))^{n-\beta-1}\Bigr(\frac{1}{f^{'}(\tau)}\frac{d}{d\tau}\Bigl)^{n}b(\tau)\,d\tau ,
\end{equation}

with $n=[\beta]+1$ (to see the more general definition of the operator $\mathcal{O}_{\beta, f}^{t}$ and its use, see the texts [5] and [7] given in the bibliography ).

\quad

Therefore, equation (3.6) becomes.

\begin{equation}
		\mathcal{O}_{\beta, f}^{t}(b_{l}(t))=A(t)b_{l}(t)[1+2\lambda(t)\sum_{j=1}^{s}b_{j}(t)].
\end{equation}

\quad

In order to simplify the situation and be able to solve (3.8) we could assume that 

\begin{equation}
	s=1, \hspace{0.1cm} A(t)=\frac{C}{2\lambda(t)E_{\beta,1}(C(f(t))^{\beta})+1},\hspace{0.1cm} A(t)[1+2\lambda(t)b_{1}(t)]=C\hspace{0.1cm} with \hspace{0.1cm} C\hspace{0.1cm} a \hspace{0.1cm} real\hspace{0.1cm} constant
\end{equation}

\quad

where $ E_{\beta,1}(C(f(t))^{\beta}) $ is a Mittag-Leffler composite function ( for a more detailed examination of this function, please refer to [3]).

By virtue of (3.9) the system (3.8)  becomes 

\begin{equation}
\begin{cases}	
\mathcal{O}_{\beta,	f}^{t}(b_{1}(t))=Cb_{1}(t) \\
\mathcal{O}_{\beta,	f}^{t}(b_{0}(t))=Cb_{0}(t).
\end{cases}	
\end{equation}

And so  $b_{1}(t)$, $ b_{0}(t)$ are an eigenfunctions of the operator $\mathcal{O}_{\beta, f}^{t}$ so we will have that 

\begin{equation}
	b_{1}(t)=E_{\beta,1}(C(f(t))^{\beta}),\hspace{0.1 cm} b_{0}(t)=E_{\beta,1}(C(f(t))^{\beta}).
\end{equation}

\quad

 Let us now look at some examples in which to apply prop (3.1)

 \quad

 \begin{example}
 	Suppose that $N_{x}=M_{x}=\partial_{x}$ and that $\mathcal{O}_{t}=\mathcal{O}_{\beta, f}$ and furthermore $L=\mathbb{I}\cdot(\cdot)$  then with such operators (1.2) becomes 
 	
 	\begin{equation}
 		\mathcal{O}_{\beta,	f}^{t}u-\frac{C}{2E_{\beta,1}(C(f(t))^{\beta})+1}\partial_{x}(u^{2})=\partial_{xx}u+\frac{C}{2E_{\beta,1}(C(f(t))^{\beta})+1}u.
 	\end{equation}

 	The objective is to determine whether the problem can be solved using proposition (3.1). First, it is easy to see that there exists a vector space generated by the polynomial functions $1,x$ i.e. $W=<1, x>$ that satisfies together with the operators with respect to the spatial coordinate the assumptions H(3.1). It follows therefore that, in the absence of suitable initial conditions, a potential solution can be described as follows $$ u(x,t)=b_{1}(t)x+ b_{0}(t).$$
 		
 		\quad
 		
 		To completely determine the solutions just illustrated being $A(t)=\frac{C}{2\lambda(t)E_{\beta,1}(C(f(t))^{\beta})+1} $ and $A(t)[1+2\lambda(t)b_{1}(t)]=C$  with $C$  a real constant  and $s=1$ then we are in condition (3.9). Then the coefficients $b_{l}(t)$ are of the type (3.11),and so we have a possible  solutions for (3.12).
 		
 		\begin{equation}
 			u(x,t)= E_{\beta,1}(C(f(t))^{\beta})(x+1).
 		\end{equation}
 	
 \end{example}

\quad
Now to conclude the examples of proposition (3.1) instead of the usual Euclidean space in the $x$ coordinate we move to the hyperbolic Poincairé half-plane in the $\eta$ coordinate.

\begin{example}
	Now in (1.2) we consider the following operators in the hyperbolic poincairé half-plane  $N_{\eta}=M_{\eta}=\sinh \eta\partial_{\eta}$,$\mathcal{O}_{t}=\mathcal{O}_{\beta, f}$ with $L=\mathbb{I}\cdot(\cdot)$ 
	and as an additional assumption we assume that $A(t)=\frac{C}{2E_{\beta,1}(C(f(t))^{\beta})+1}$  then (1.2) becomes
	\begin{equation}
		\mathcal{O}_{\beta,	f}^{t}u-\frac{C}{2E_{\beta,1}(C(f(t))^{\beta})+1}\sinh \eta\partial_{\eta}(u^{2})=\sinh \eta\partial_{\eta}\sinh \eta\partial_{\eta}u+\frac{C}{2E_{\beta,1}(C(f(t))^{\beta})+1}u.
	\end{equation}
\quad

Proceeding as Example 4 the reference invariant space for solutions are $W=<\ln(\tanh \frac{\eta}{2 }),1>$ again we will determine the coefficients $b_{l }(t)$ as in the previous example. So the possible solution are of the type

\begin{equation}
		u(\eta,t)= E_{\beta,1}(C(f(t))^{\beta})\Bigr(\ln \Bigr(\tanh \frac{\eta}{2 }\Bigl)+1\Bigl).
\end{equation}

\end{example}
\quad

On this last example it is worth making an observation by slightly modifying (3.14).

\begin{os}

Starting from the last example with the same conditions except that $N_{\eta}=\frac{1}{\sinh \eta}\partial_{\eta}$ and we get (1.2)(a) with the hyperbolic Laplacian $\Delta_{H}$ i.e.

\begin{equation}
		\mathcal{O}_{\beta,	f}^{t}u-\frac{C}{2E_{\beta,1}(C(f(t))^{\beta})+1}\sinh \eta\partial_{\eta}(u^{2})=\Delta_{H}u+\frac{C}{2E_{\beta,1}(C(f(t))^{\beta})+1}u.
\end{equation}
It is verified that by the method of Example 5 the solutions are the same i.e. they are (3.15) despite the fact that we have changed the operator $N_{\eta}$. Moreover, the linear operator $L$, which in that case is an identity, does not create any relationship between the operators $N_{\eta}$ and $M_{\eta}$ but to apply proposition 3.1 such an assumption is not necessary and this is advantageous since it gives us more possibilities to solve such equations .
\end{os}

\subsection{Solving the operator equation of heat type by the solutions of the nonhomogeneous operator equation of Burgers type by invariant spaces}

\quad

Let us try to apply proposition (2.2) to solve the linear operator equation (2.1) in which case we must have the set of assumptions H(2.1) . These assumptions involve the operator $\mathcal{O}_{t}$ to have more stringent constraints to before and in that case we cannot make use of fractional differential operators.

\quad

We then begin to obtain some examples the solutions of (1.2) via invariant spaces and then apply prop 2.2 to obtain the solutions of (2.1) . Then we need the hypotheses H(2.1) and H(3.1).

\quad

\begin{example}
		We take PDE (3.12) again but instead of the operator $\mathcal{O}_{\beta, f}^{t}$ we put the classic partial derivative operator i.e. $\mathcal{O}_{t}=\partial_{t}$ to make the hypothesis H(2.1) hold. So (3.12) becomes that at the beginning of the article the (1.1) i.e.
		
		$$
				\partial_{t}u-A(t)\partial_{x}(u^{2})=\partial_{xx}u+A(t)u.
		$$
		
		\quad
	We solve (1.1) by invariant subspaces method; obviously it is the same as in Example 4 i.e. $W=<x, 1>$ . We now calculate the coefficients $b_{l}$ with $l=0,1$ of the solution for prop 3.1 such coefficients satisfy the following generally nonlinear ODE system
	
	\begin{equation}
	\begin{cases}
		\partial_{t}b_{1}(t)=A(t)b_{1}(t)(1+2b_{1}(t))\\
			\partial_{t}b_{0}(t)=A(t)b_{0}(t)(1+2b_{1}(t))\\
	\end{cases}	
	\end{equation}
	\quad
	We try to simplify the situation by looking for the particular solution of the type $b_{1}(t)=b_{0}(t)=b(t)$ then (3.17) reduces to one equation
	
	\begin{equation}
		\partial_{t}b(t)=\frac{1}{(t-t_{0})}(b(t)+ 2b^{2}(t)).
	\end{equation}
\quad
Here it is implied that $A(t)=\frac{1}{(t-t_{0})}$ and this is due to the hypothesis H(2.1) remembering that $t_{0}$ is an oppportune initial condition.

\quad
Equation (3.18) is nonlinear but can be solved by quadratures through separation of variables; in fact, after a few algebraic steps we arrive at integrating the following equation $$ \int_{b(t_{1})}^{b(t)}\frac{1}{y+2y^{2}}\,dy=\int_{t_{1}}^{t}\frac{1}{(s-t_{0})}\,ds       .$$ 
Where $t_{1}$ is an appropriate initial condition that it is convenient that $t_{1}\neq t_{0}$ to avoid any criticality.
Integrating the first member integral by the usual standard method for integrating rational algebraic functions, we obtain the following solution

\begin{equation}
	b(t)=\frac{c(t-t_{0})}{1-2c(t-t_{0})}.
\end{equation}
	Where $c$ is an appropriate real number determined by the initial conditions just set and the integration operations.
	So the solution of (1.1) can be
	
	\begin{equation}
		u(x,t)=\frac{c(t-t_{0})}{1-2c(t-t_{0})}(x+1).
	\end{equation}
	
	\quad
	
	The function $g(\xi(x, t))=e^{\xi(x,t)}$
	already used in the section 2 is the right choice to solve equation (2.1), which in that case we recall is $$ \partial_{t} \psi= \partial _{xx}\psi .$$ Since it satisfies (2.12) and (2.13) the function $e^{\xi(x,t)}$ allows us to apply proposition 2.2. So in addition to the Hermite polynomials as solutions of (2.1) we get other solutions of the type
	
  $$\psi(x,t)=g(A(t)(N_{x})^{-1}u(x,t))=g\Bigl(\frac{c(t-t_{0})}{1-2c(t-t_{0})}\int_{x_{0}}^{x}(s+1)\,ds\Bigr)=e^{\frac{c(t-t_{0})}{1-2c(t-t_{0})}\int_{x_{0}}^{x}(s+1)\,ds}                $$
  
  \begin{equation}
  	\psi(x,t)=e^{\frac{c(t-t_{0})}{1-2c(t-t_{0})}(\frac{x^{2}}{2}+x+c_{1})}. 
  \end{equation}
\quad
Where we always remember that $x_{0}$ is an appropriate initial condition chosen for the spatial part of $u(x,t)$ and $c_{1}$ is a constant integral derived from the choice of $x_{0}$.

\end{example}

\begin{example}
	
	In this example, we will proceed in a similar manner to that employed in Example 6, thus we will omit a few steps.
	First of all, as we did before, we place $\mathcal{O}_{t}=\partial_{t}$ and then substitute it to $\mathcal{O}_{\beta, f}^{t}$ at (3.14).
	Then (3.14) becomes equation (2.18) of Example 1. The invariant space of (2.18) is the same as (3. 14) i.e., $W=<\ln(\tanh \frac{\eta}{2 }),1>$, while the coefficients $b_{1}(t), b_{0}(t)$ satisfy the system (3.17) but again we consider a particular solution in which $b_{1}(t)=b_{0}(t)=b(t)$ and it is (3.19). So in addition to (2.20) a solution of (2.18) is as follows
	
	\begin{equation}
		u(x,t)=\frac{c(t-t_{0})}{1-2c(t-t_{0})}\Bigl(\ln\Bigl(\tanh \frac{\eta}{2 }\Bigr)+1\Bigr)
	\end{equation}

\quad

In order to solve differential equation (2.22), which we recall is $$ \partial_{t}\phi=\sinh \eta \partial_{\eta} \sinh \eta \partial_{\eta}  \phi ,$$ we will make use of the function $g(\xi(\eta , t))=e^{\xi(\eta,t)}$ that verifies the hypotheses (2.12) and (2.13) and allow us to find the solution by thanks of Proposition 2.2.
Thus we can then reload other kinds of solutions for (2.22) besides the generalized Hermite polynomials (2. 20).One such solution is given by the following: $$\psi(\eta,t)=g(A(t)(N_{x}^{-1}u(\eta,t))=g\Bigl(\frac{c(t-t_{0})}{1-2c(t-t_{0})}\int_{\eta_{0}}^{\eta}\frac{1}{\sinh s}\Bigl(\ln\Bigl(\tanh \frac{s}{2 }\Bigr)+1 \Bigr)\, ds\Bigr)$$
$$\psi(\eta,t)=e^{\frac{c(t-t_{0})}{1-2c(t-t_{0})}\int_{\eta_{0}}^{\eta}\frac{1}{\sinh s}(\ln(\tanh \frac{s}{2 })+1) \,ds} $$

\begin{equation}
\psi(\eta,t)=e^{\frac{c(t-t_{0})}{1-2c(t-t_{0})}\Bigl(\frac{(\ln(\tanh \frac{\eta}{2 })+1)^{2}}{2}+ c_{1}\Bigr) }  .                             
\end{equation}

\quad
Again $\eta_{0}$ is an appropriate fixed initial condition and $c_{1}$ is a constant derived from that condition  through integration .

\end{example}

\quad
And now we begin with the last section of the article

\section{A dimensional generalization of the non-homogeneous Burgers-type operator equation}

\quad
We want to introduce the following dimensional generalization of equation PDE (1.2)

\begin{equation}
(a)	\mathcal{O}_{t}u-\sum_{d=1}^{m}A_{d}(t){M(d)}_{x_{d}}(uL_{d}(u))=(\sum_{d=1}^{m}{N(d)}_{x_{d}}{M(d)}_{x_{d}})u+(\sum_{d=1}^{m}A_{d}(t))u \hspace{0.1 cm}
\end{equation}
$$(b) \mathcal{O}_{t}u-\sum_{d=1}^{m}A_{d}(t){N(d)}_{x_{d}}(uL_{d}(u))=(\sum_{d=1}^{m}{N(d)}_{x_{d}}{M(d)}_{x_{d}})u+(\sum_{d=1}^{m}A_{d}(t))u.$$

\quad
Where the function $u=u(x_{1},...,x_{j},...,x_{m},t)$ is a scalar function dependent on $m$ spatial variables. While $N(d)_{x_{d}}$ and $M(d)_{x_{d}}$can be defined as a sequence of linear differential operators with respect to the spatial variable $x_{d}$. And $L_{d}$ is a linear operator with respect to functions of the type $f(x_{1},...,x_{j},..., x_{m})$. And all these characteristics just listed hold as the index $d \in \{ 1,...,j,...,m \}$ changes.

\quad

We attempt to find a solution to (4.1) by adding the assumption (3.3) for the operators $N(d)_{x_{d}}$ and $M(d)_{x_{d}}$ for each $d$. Then we could try to find an admissible solution of the type $u=\sum_{d=1}^{m}u_{d}(x_{d},t)$ where in turn the contributions of this summation are of the type $u_{d}(x_{d},t)=\sum_{l_{d}=0}^{m_{d}}b_{d}^{(l_{d})}(t)\omega_{d}^{(l_{d})}(x_{d})$ with $m_{d}>0$ for each $d$. On the functions $\omega_{d}^{(l_{d})}(x_{d})$ we assume that the assumptions (3.1) hold with respect to the operators $N(d)_{x_{d}}$ and $M(d)_{x_{d}}$  and in addition we add the fact that 

\begin{equation}
	L_{d}(\omega_{d^{'}}^{(l_{d^{'}})}(x_{d^{'}}))=\lambda_{d}(t)(\omega_{d^{'}}^{(l_{d^{'}})}(x_{d^{'}}))\hspace{0.2cm} \forall d^{'} \in \{ 1,...,j,...,m \},
\end{equation}

  always for each $d$, where with $\lambda_{d}(t)$ is the associated eigenvalue.  In summary, we can name this list of hypotheses, which on the dimensional index $d$ thus $H(3.1)_{d}$.
\quad

It is easy to see that under the assumptions $H(3.1)_{d}$ for each $d$ the following properties occur

\begin{equation}
	N(d)_{x_{d}}(u)=M(d)_{x_{d}}(u)=\sum_{l_{d}=1}^{m_{d}}b_{d}^{(l_{d})}(t) \hspace{0.2cm} \hspace{0.1cm}in\hspace{0.1cm}(4.1)(b)\hspace{0.1cm} or \hspace{0.2cm} M(d)_{x_{d}}(u)=\sum_{l_{d}=1}^{m_{d}}b_{d}^{(l_{d})}(t)\hspace{0.1cm}in\hspace{0.1cm}(4.1)(a)
\end{equation}
	
\begin{equation}
	N(d)_{x_{d}}M(d)_{x_{d}}u=0
\end{equation}	

\begin{equation}
	L_{d}(u)=\lambda_{d}(t)u.
\end{equation}

\quad

In this context, then, finding an admissible solution for (4.1) consists in finding a system of differential equations compatible with (4.1) for the coefficients $b_{d}^{(l_{d})}(t)$ as $l_{d}$ and $d$ vary. And this is done by the method of invariant spaces. Let us see how.

\quad

First, by the distributive property and by (4.5) and (4.4), (4.1) in case (a) becomes

$$	\mathcal{O}_{t}u-\sum_{d=1}^{m}A_{d}(t)\lambda_{d}(t)M(d)_{x_{d}}(u^{2})=\sum_{d=1}^{m}A_{d}(t)u , $$

and in case (b) becomes

$$ \mathcal{O}_{t}u-\sum_{d=1}^{m}A_{d}(t)\lambda_{d}(t)N(d)_{x_{d}}(u^{2})=\sum_{d=1}^{m}A_{d}(t)u . $$

And having assumed that $u=\sum_{d=1}^{m}u_{d}$, we have that in (4.1)(a)

$$\sum_{d=1}^{m}\mathcal{O}_{t}u_{d}-2\sum_{d=1}^{m}A_{d}(t)\lambda_{d}(t)[M(d)_{x_{d}}(u_{d}) (\sum_{d^{'}=1}^{m}u_{d^{'}})    ]=\sum_{d=1}^{m}A_{d}(t)(\sum_{d^{'}=1}^{m}u_{d^{'}}  )
	 $$
	 
and in (4.1)(b) we have that

$$\sum_{d=1}^{m}\mathcal{O}_{t}u_{d}-2\sum_{d=1}^{m}A_{d}(t)\lambda_{d}(t)[N(d)_{x_{d}}(u_{d}) (\sum_{d^{'}=1}^{m}u_{d^{'}})    ]=\sum_{d=1}^{m}A_{d}(t)(\sum_{d^{'}=1}^{m}u_{d^{'}}  )
.$$	 
	 
\quad

Here the linearity of all differential operators was exploited and the square of the solution $u$ was developed by means of the Leibniz property enjoyed by the operators $N(d)_{x_{d}}$ and $M(d)_{x_{d}}$ for the hypotheses $H(3.1)_{d}$, in addition to the distributive property of the last summation at the second member.

\quad

 The equation can be simplified. In fact, we have that

$$\sum_{d=1}^{m}\mathcal{O}_{t}u_{d}=2\sum_{d=1}^{m}A_{d}(t)\lambda_{d}(t)[M(d)_{x_{d}}(u_{d}) (\sum_{d^{'}=1}^{m}u_{d^{'}})    ]+\sum_{d=1}^{m}A_{d}(t)(\sum_{d^{'}=1}^{m}u_{d^{'}}  ) \hspace{0.1 cm } in \hspace{0.1cm} case \hspace{0.1cm} (4.1)(a) $$

$$\sum_{d=1}^{m}\mathcal{O}_{t}u_{d}=2\sum_{d=1}^{m}A_{d}(t)\lambda_{d}(t)[N(d)_{x_{d}}(u_{d}) (\sum_{d^{'}=1}^{m}u_{d^{'}})    ]+\sum_{d=1}^{m}A_{d}(t)(\sum_{d^{'}=1}^{m}u_{d^{'}}  ) \hspace{0.1 cm } in \hspace{0.1cm} case \hspace{0.1cm} (4.1)(b). $$

And in either case, the result will be the same.

$$ \sum_{d=1}^{m}\mathcal{O}_{t}u_{d}=(\sum_{d^{'}=1}^{m}u_{d^{'}})[\sum_{d=1}^{m}2A_{d}(t)\lambda_{d}(t)(\sum_{l_{d}=1}^{m}b_{d}^{(l_{d})}(t))+\sum_{d=1}^{m}A_{d}(t) ]  .$$

$$ \sum_{d=1}^{m}\mathcal{O}_{t}u_{d}=(\sum_{d=1}^{m}u_{d})[\sum_{d^{'}=1}^{m}2A_{d^{'}}(t)\lambda_{d^{'}}(t)(\sum_{l_{d^{'}}=1}^{m}b_{d^{'}}^{(l_{d^{'}})}(t))+\sum_{d^{'}=1}^{m}A_{d^{'}}(t) ]     .$$

\quad
Now we can substitute for the functions $u_{d}$ the expressions $\sum_{l_{d}=0}^{m_{d}}b_{d}^{(l_{d})}(t)\omega_{d}^{(l_{d})}(x_{d})$ and the equations then become

\begin{equation}
 \sum_{d=1}^{m} \sum_{l_{d}=0}^{m_{d}}\mathcal{O}_{t}(b_{d}^{(l_{d})}(t))\omega_{d}^{(l_{d})}(x_{d})=  \sum_{d=1}^{m} \sum_{l_{d}=0}^{m_{d}} b_{d}^{(l_{d})}(t)[\sum_{d^{'}=1}^{m}2A_{d^{'}}(t)\lambda_{d^{'}}(t)(\sum_{l_{d^{'}}=1}^{m}b_{d^{'}}^{(l_{d^{'}})}(t))+\sum_{d^{'}=1}^{m}A_{d^{'}}(t) ]\omega_{d}^{(l_{d})}(x_{d}).                      
\end{equation}	 
\quad
Equalizing to $0$ the new coefficients of the functions $\omega_{d}^{(l_{d})}(x_{d})$ formed in equation (4.6) we obtain the following propertys

\begin{prop}
	Equation (4.1) under the assumptions $H(3.1)_{d}$ can have as possible solution of the type
	
	\begin{equation}
		u(x_{1},...,x_{j},...,x_{m},t)=\sum_{d=1}^{m}\sum_{l_{d}=0}^{m_{d}}b_{d}^{(l_{d})}(t)\omega_{d}^{(l_{d})}(x_{d}).
	\end{equation}
If the coefficients $b_{d}^{(l_{d})}(t)$ satisfy the following system of differential equations

\begin{equation}
\mathcal{O}_{t}(b_{d}^{(l_{d})}(t))=b_{d}^{(l_{d})}(t)[\sum_{d^{'}=1}^{m}2A_{d^{'}}(t)\lambda_{d^{'}}(t)(\sum_{l_{d^{'}}=1}^{m}b_{d^{'}}^{(l_{d^{'}})}(t))+\sum_{d^{'}=1}^{m}A_{d^{'}}(t) ]	 \hspace{0.1cm} \forall\hspace{0.1cm}  d\hspace{0.1cm}  \in \{ 1,...,j,...,m\}\hspace{0.1cm} and\hspace{0.1cm} \forall\hspace{0.1cm} l_{d} \in \{ 0,...,j_{d},...,m_{d}\}.
\end{equation}

\end{prop}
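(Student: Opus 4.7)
The plan is to mirror the strategy used in Proposition 3.1, but now carried out dimensionally. The derivation leading up to equation (4.6) in the body of the text already performs most of the work, so my proof would begin by formally invoking the assumptions $H(3.1)_d$ to justify the three simplifications (4.3), (4.4), (4.5). Concretely, I would first substitute the ansatz $u = \sum_{d=1}^{m} u_d$ with $u_d(x_d,t) = \sum_{l_d=0}^{m_d} b_d^{(l_d)}(t)\omega_d^{(l_d)}(x_d)$ into equation (4.1), noting that the Leibniz property (3.3) holds for both $N(d)_{x_d}$ and $M(d)_{x_d}$ and that (3.1) ensures $N(d)_{x_d}M(d)_{x_d} u_d = 0$; since each operator acts nontrivially only in its own spatial variable, cross contributions $N(d)_{x_d}M(d)_{x_d} u_{d'}$ for $d' \neq d$ vanish, so the second-order block on the right-hand side of (4.1) disappears.

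Next I would handle the nonlinear term. Writing $u^2 = (\sum_d u_d)^2 = \sum_{d,d'} u_d u_{d'}$ and applying the Leibniz property of $M(d)_{x_d}$ (resp.\ $N(d)_{x_d}$) together with the eigenvalue property (4.5) of $L_d$, I obtain the factor $2\sum_{d'=1}^m u_{d'}$ multiplying $M(d)_{x_d}(u_d) = \sum_{l_d=1}^{m_d} b_d^{(l_d)}(t)$ (the $l_d=0$ term drops by (3.1)). This exactly reproduces the displayed intermediate equations preceding (4.6), and expanding each $u_d$ as a linear combination of the $\omega_d^{(l_d)}(x_d)$ yields (4.6) directly.

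The final step is the separation-of-coefficients argument: I would assume (as is implicit in the use of the invariant subspace method, cf.\ Proposition 3.1) that the family $\{\omega_d^{(l_d)}(x_d)\}_{d,l_d}$ is linearly independent as a family of functions of $(x_1,\dots,x_m)$. Under that hypothesis, equation (4.6) forces the coefficient of each $\omega_d^{(l_d)}(x_d)$ to vanish on both sides, and since $\mathcal{O}_t$ acts only in $t$ while the bracketed factor on the right is purely a function of $t$, the $(d,l_d)$-coefficient comparison yields exactly (4.8).

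The main obstacle I expect is this linear-independence assumption: the $\omega_d^{(l_d)}$ live in different spatial variables, and while independence is standard for product-type bases (e.g.\ $1, x_1, 1, x_2$ in Example 4's multidimensional analogue), the $\omega_0$ constants for different $d$ overlap and must be grouped carefully. I would address this either by identifying the redundant constant modes and combining the corresponding ODEs, or by working modulo the obvious kernel so that the coefficient comparison remains well-defined. Once that is settled, the remainder of the proof is the direct substitution/identification argument sketched above, and the statement follows.
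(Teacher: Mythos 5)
Your proposal follows essentially the same route as the paper: the paper's justification of Proposition 4.1 is precisely the derivation in the text preceding it, namely substituting the separated ansatz, using $H(3.1)_d$ to obtain (4.3)--(4.5), arriving at (4.6), and then equating the coefficients of the $\omega_d^{(l_d)}(x_d)$ to zero. Your additional attention to the linear independence of the family $\{\omega_d^{(l_d)}\}$ (in particular the overlapping constant modes $\omega_d^{(0)}$ across different $d$) is a genuine refinement of a point the paper leaves implicit, but it does not change the argument.
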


\quad

In general, the system of equations (4.8) is not at all linear and it can be complicated to find exact solutions. Therefore, we can add constraint on this equation to make it simpler and find exact solutions. For example, if in (4.8)  we have as operator $\mathcal{O}_{t}=\mathcal{O}_{\beta, f}^{t}$ the system (4.8) becomes

\begin{equation}
\mathcal{O}_{\beta, f}^{t}(b_{d}^{(l_{d})}(t))	=b_{d}^{(l_{d})}(t)[\sum_{d^{'}=1}^{m}2A_{d^{'}}(t)\lambda_{d^{'}}(t)(\sum_{l_{d^{'}}=1}^{m}b_{d^{'}}^{(l_{d^{'}})}(t))+\sum_{d^{'}=1}^{m}A_{d^{'}}(t) ]	 \hspace{0.1cm} \forall\hspace{0.1cm}  d\hspace{0.1cm}  \in \{ 1,...,j,...,m\}\hspace{0.1cm} and\hspace{0.1cm} \forall\hspace{0.1cm} l_{d} \in \{ 0,...,j_{d},...,m_{d}\}
\end{equation}
                
and we add the following constraint

\begin{equation}
	\sum_{d^{'}=1}^{m}2A_{d^{'}}(t)\lambda_{d^{'}}(t)(\sum_{l_{d^{'}}=1}^{m}b_{d^{'}}^{(l_{d^{'}})}(t))+	\sum_{d^{'}=1}^{m}A_{d^{'}}(t)=A \hspace{0.1cm} with \hspace{0.1cm} A \hspace{0.1cm} a \hspace{0.1cm} real\hspace{0.1cm} constant .
\end{equation}
\quad
This strategy of simplifying the system can be seen as  the generalization in the one-dimensional case to a single spatial coordinate (see (3.9)). Indeed, as in the case $d=1$, case, the choice of $b_{d}^{(l_{d})}(t)$ must satisfy the system (4.10) and (4.11), and this is probably not always possible.

Should it be, the system (4.10) becomes linear

\begin{equation}
	\mathcal{O}_{\beta, f}^{t}(b_{d}^{(l_{d})}(t))=b_{d}^{(l_{d})}(t)A  \hspace{0.1cm} \forall\hspace{0.1cm}  d\hspace{0.1cm}  \in \{ 1,...,j,...,m\}\hspace{0.1cm} and\hspace{0.1cm} \forall\hspace{0.1cm} l_{d} \in \{ 0,...,j_{d},...,m_{d}\}.
\end{equation}

\quad
And so (4.7) becomes

\begin{equation}
		u(x_{1},...,x_{j},...,x_{m},t)=E_{\beta,1}(A(f(t))^{\beta})\sum_{d=1}^{m}\sum_{l_{d}=0}^{m_{d}}\omega_{d}^{(l_{d})}(x_{d}).
\end{equation}

\quad

Let us now see how to apply what has been said in the following example.

\begin{example}
	We are in the hyperbolic half-plane and will consider the hyperbolic Laplacian in its entirety in the hyperbolic coordinates i.e.
	
	$$ \Delta_{H}=\frac{1}{\sinh \eta}\partial_{\eta}\sinh \eta \partial_{\eta} + \frac{1}{{\sinh}^{2}\eta} \partial_{\alpha \alpha}         $$ 
	 For further discussion, see [7].
	 \quad
	 In this context, we consider the following differential and non-differential operators
	 
	 $$  M(1)_{\eta}=\sinh \eta \partial_{\eta} , \hspace{0.2 cm} N(1)_{\eta}=\frac{1}{\sinh \eta}\partial_{\eta}$$
	 
	 $$  M(2)_{\alpha}= \partial_{\alpha} , \hspace{0.2 cm} N(2)_{\alpha}=\frac{1}{{\sinh}^{2}\eta }\partial_{\alpha}                                                           $$
	 
	 $$  L_{\eta}=L_{\alpha}=\mathbb{I}, \hspace{0.2cm} \mathcal{O}_{t}=\mathcal{O}_{\beta, f}^{t}        .  $$
\quad
Then (4.1)(a) becomes

\begin{equation}
	\mathcal{O}_{\beta, f}^{t}u- A_{\eta}(t)\sinh \eta \partial_{\eta} (u^{2})-A_{\alpha}(t)\partial_{\alpha} (u^{2})=\Delta_{H}u+(A_{\eta}(t)+A_{\alpha}(t))u
\end{equation}	 

\quad
with $u=u(\eta, \alpha, t)$.

\quad

 The space of invariant functions satisfying the hypotheses $H(3.1)_{\eta}$ and $H(3.1)_{\alpha}$ consists of the following generators
 
 $$  \omega_{\eta}^{(1)}(\eta)= \ln \tanh \frac{\eta}{2}   , \hspace{0.2cm}  \omega_{\eta}^{(0)}(\eta) =1                                   $$
 $$ \omega_{\alpha}^{(1)}(\alpha)= \alpha   , \hspace{0.2cm}  \omega_{\alpha}^{(0)}(\alpha) =1   .                                                        $$

\quad

To obtain a solution like (4.12) we need to find $A_{\eta}(t)$, $A_{\alpha}(t)$ compatible with constraint (4.10) and system (4.11) for an appropriate choice of a real constant $A$. In this case the constraint (4.10) becomes

\begin{equation}
	2A_{\eta}(t)b_{\eta}^{(1)}(t) +2A_{\alpha}(t)b_{\alpha}^{(1)}(t)+A_{\eta}(t)+A_{\alpha}(t)=A. 
\end{equation}

\quad
Among the many solutions that can solve the constraint (4.14) compatible with the system (4.11) we can choose the following

\begin{equation}
	A_{\eta}(t)=A_{\alpha}(t)=\frac{A}{2[1+2E_{\beta,1}(A(f(t))^{\beta})]} .
\end{equation}

\quad

So to recapitulate, if among the coefficients $b_{\eta}^{(1)}(t)$, $b_{\alpha}^{(1)}(t)$ we choose those that depend on $ A_{\eta}(t)$, $ A_{\alpha}(t)$ according to the relation (4.15), we will obtain compatibility with (4.14) and (4.11) and thus the possible solution will be

\begin{equation}
		u(\eta,\alpha,t)=E_{\beta,1}(A(f(t))^{\beta})(\ln \tanh \frac{\eta}{2}+ \alpha+ 2).
\end{equation}
\end{example}

\subsection{A dimensional generalization of the nonhomogeneous burgers type operator equation on curved space-times}
\quad

In this section we will use differential varieties applied to equation (4.1), where in the operator $\sum_{d=1}^{m}N(d)_{x_{d}}M(d)_{x_{d}}$ we will make the Laplace-Beltrami operator induced by the metric of the pseudo-Riemannian variety and Riemannian varieties

\begin{equation}	
\Delta=\frac{1}{\sqrt{|g|}}\sum_{i=1}^{n}\partial_{i}\big(\sum_{j=1}^{n}\sqrt{|g|}g^{ij}\partial _{j}\big).
\end{equation}
\quad
Where $n$ is the dimension of the variety, $g^{ij}$ is the element of the inverse matrix of the metric tensor $ g_{ij}$ and $|g|$ is the  modulus  of the determinant of the metric tensor matrix; for further study of the topics Riemannian and pseudo-Riemannian varieties with related metrics and the Laplace-Beltrami operator, one can consult the text [4] in the bibliography. 
 However, (4.17) has already been used in the Riemannian variety of the Poincairè semiplane in Example 8; this is precisely the hyperbolic Laplacian $\Delta_{H}$, and it is from a generalization of (4.13) in terms of metric tensors that we will develop this topic. Let us see how.
 
 \quad
 
Given a metric tensor $g_{ij}$ associated with a variety $M$ of dimension $n$. We make use of (4.1) in these terms; by imposing

\begin{equation}
 \Delta_{M}:=\sum_{d=1}^{m}N(d)_{x_{d}}M(d)_{x_{d}}=\sum_{d=1}^{n}\frac{1}{\sqrt{|g|}}\partial_{d}\big(\sum_{j=1}^{n}\sqrt{|g|}g^{dj}\partial _{j}\big)
\end{equation}

\begin{equation}	
		The \hspace{0.1cm}operators\hspace{0.1cm} M(d)_{x_{d}}\hspace{0.1cm} and\hspace{0.1cm} N(d))_{x_{d}} are\hspace{0.1cm} used\hspace{0.1cm} in\hspace{0.1cm} equations\hspace{0.1cm} (4.1)(a)\hspace{0.1cm} \forall \hspace{0.1cm} d\hspace{0.1cm} \in \{1,2,...,m \} .
\end{equation}
\quad
 The differential operators $N(d)_{x_{d}}$,$M(d)_{x_{d}}$ will be chosen appropriately both to satisfy the assumptions $H(3.1)_{d}$ and also to obtain exact solutions. We will use metrics that will allow this.
Howewer, in this way we will obtain the following PDE.

\begin{equation}
\mathcal{O}_{t}u-\sum_{d=1}^{n}A_{d}(t)	M(d)_{x_{d}}[uL_{d}(u)]=\sum_{d=1}^{n}\frac{1}{\sqrt{|g|}}\partial_{d}\big(\sum_{j=1}^{n}\sqrt{|g|}g^{dj}\partial _{j}u\big)+(\sum_{d=1}^{m}A_{d}(t))u.
\end{equation}

Note that a diagonal metric would greatly simplify equation (4.20), and we will make use of such metrics.

\begin{example}
	In this example we will solve the equation (4.20)  in a gravitational field  generated by a $M$ mass which induces a Schwartzschild metric, that we recall is
	
	\begin{equation}
		ds^{2}=\Big(1-\frac{2GM}{c^{2}r}\Big)c{^2}dt^{2}-\Big(1-\frac{2GM}{c^{2}r}\Big)^{-1}dr^{2}-r^{2}d\theta^{2}- r^{2}\sin^{2}\theta d\phi^{2}.
	\end{equation}

\quad
Where $G$ is the universal gravitational constant and $c$ is the speed of light and $M$ is the source mass generating the gravitational field . The environment is the curved spacetime of coordinates $(t,r,\theta,\phi)$, where the spatial part $(r,\theta,\phi)$ are polar coordinates. We will not go into purely physical matters for example near the singularities of the metric tensor where these are static black holes and the event horizon is a spherical surface and without charge. For further discussion see ch 10[4].
\quad
Also in (4.20) we will treat $t$ as a spatial coordinate, which means that we will have another time variable if we want to consider it that way, or a parameter called $\tau$. So the solution function to (4.20) will be of the type $u=u(\tau,t,r,\theta, \phi)$.

\quad
Let us now calculate $|g|$ and hereafter $\sqrt{|g|}$

$$ g_{\mu \nu}=\begin{vmatrix}
(1-\frac{2GM}{c^{2}r})c{^2}& 0& 0&0 \\ 0&-(1-\frac{2GM}{c^{2}r})^{-1}&0&0\\ 0&0& -r^{2}&0 \\ 0&0&0& - r^{2}\sin^{2}\theta	
\end{vmatrix} $$
So
 $$ g =det(g_{\mu \nu})=-c{^2}\Big(1-\frac{2GM}{c^{2}r}\Big )\Big( 1-\frac{2GM}{c^{2}r}\Big )^{-1}r^{4}\sin^{2}\theta=-c{^2}r^{4}\sin^{2}\theta $$
 
 $$\sqrt{|g|}=\sqrt{c^{2}r^{4}\sin^{2}\theta}=cr^{2}\sin\theta.$$
 
 If we denote by $\Delta_{S}$ the Laplace-Beltrami operator associated with the Schwarzschild metric, we can compute each individual contribution of $\Delta_{S}$ which depends only on the diagonal elements of the inverse matrix
 
 $$g^{\mu \nu}=\begin{vmatrix}
 	\frac{1}{(1-\frac{2GM}{c^{2}r})^c{^2}}& 0& 0&0 \\ 0&-(1-\frac{2GM}{c^{2}r})&0&0\\ 0&0&-\frac{1}{r^{2}}&0 \\ 0&0&0& -\frac{1}{r^{2}\sin^{2}\theta	} 
 \end{vmatrix} $$

 , which we denote as $(\Delta_{S})_{d}$ where $d \in \{ t, r, \theta, \phi \}$.
After some more or less laborious calculations we get

$$(\Delta_{S})_{t}=\frac{r}{c^{2}r-2GM}\partial_{tt} \hspace{0.5cm}(\Delta_{S})_{r}=\frac{1}{(cr)^{2}}\partial_{r}[(2GM-c^{2}r)r] \partial_{r}$$

$$(\Delta_{S})_{\theta}=-\frac{1}{r^{2}\sin \theta}\partial_{\theta}\sin \theta\partial_{\theta} \hspace{0.5cm}(\Delta_{S})_{\phi}=-\frac{1}{(r\sin \theta)^{2}}\partial_{\phi \phi} .
$$ 
Given that $\Delta_{S}=(\Delta_{S})_{t}+(\Delta_{S})_{r}+(\Delta_{S})_{\theta}+(\Delta_{S})_{\phi}$ we have that

\begin{equation}
	\Delta_{S}=\frac{r}{c^{2}r-2GM}\partial_{tt}+\frac{1}{(cr)^{2}}\partial_{r}[(2GM-c^{2}r)r] \partial_{r}-\frac{1}{r^{2}\sin \theta}\partial_{\theta}\sin \theta\partial_{\theta}-\frac{1}{(r\sin \theta)^{2}}\partial_{\phi \phi} .
\end{equation}
\quad
Furthermore, knowing that $N(d)_{x_{d}}M(d)_{x_{d}}=(\Delta_{S})_{d}$ we can choose the following operators

\begin{equation}
M(t)_{t}=\partial_{t}, \hspace{0.2cm}M(r)_{r}=(2GM-c^{2}r)r \partial_{r}, \hspace{0.2cm}M(\theta)_{\theta}=\sin \theta \partial_{\theta}, \hspace{0.2cm} M(\phi)_{\phi}=\partial_{\phi}.
\end{equation}

\begin{equation}
N(t)_{t}=\frac{r}{c^{2}r-2GM}\partial_{t}, \hspace{0.2cm}N(r)_{r}=\frac{1}{(cr)^{2}}\partial_{r},\hspace{0.2cm}N(\theta)_{\theta}=-\frac{1}{r^{2}\sin \theta}\partial_{\theta}, \hspace{0.2cm}N(\phi)_{\phi}=-\frac{1}{(r\sin \theta)^{2}}\partial_{\phi}.  
\end{equation}
 And finally by posing $\mathcal{O}_{\tau}=\mathcal{O}_{\beta, f}^{\tau}$ and $L_{d}=\mathbb{I} \hspace{0.2cm}\forall d \in \{t, r, \theta, \phi \}$ the equation (4.20) becomes
 
 \begin{equation}
 	\mathcal{O}_{\beta, f}^{\tau}u-A_{t}(\tau)\partial_{t}(u^{2})-A_{r}(\tau)(2GM-c^{2}r)r \partial_{r}(u^{2}) -A_{\theta}(\tau)\sin \theta \partial_{\theta}(u^{2})-A_{\phi}(\tau)\partial_{\phi}(u^{2})=\end{equation}
 $$\Delta_{S}u+(A_{t}(\tau)+A_{r}(\tau)+A_{\theta}(\tau)+A_{\phi}(\tau))u.$$
 
 \quad
 After some calculations of standard one-variable integration we obtain the invariant functions $ \omega_{d}^{i}(x_{d})$ for equation (4.25).
 
 \begin{equation}
 	\omega_{t}^{(1)}(t)=t, \hspace{0.2cm} \omega_{t}^{(0)}(t)=1,\hspace{0.2cm}	\omega_{r}^{(1)}(r)=\frac{1}{2GM}\ln\Big(\frac{r}{2GM-c^{2}r}\Big) ,\hspace{0.2cm}\omega_{r}^{(0)}(r)=1
 \end{equation}

\begin{equation}
	\omega_{\theta}^{(1)}(\theta)=\ln \tan \frac{\theta}{2}, \hspace{0.2cm} \omega_{\theta}^{(0)}(\theta)=1,\hspace{0.2cm} 	\omega_{\phi}^{(1)}(\phi)=\phi,\hspace{0.2cm} \omega_{\phi}^{(0)}(\phi)=1.
\end{equation}
\quad
 There are the premises to apply Proposition (4.1) to find the solution. We will use the same method as in Example 8 to obtain the coefficients $b_{d}^{(l_{d})}(\tau)$. In fact, by placing the coefficients $A_{d}(\tau)$ in such a way
 
 \begin{equation}
 	A_{t}(\tau)=A_{r}(\tau)=A_{\theta}(\tau)=A_{\phi}(\tau)=\frac{A}{4[1+2E_{\beta,1}(A(f(\tau))^{\beta})]}. 
 \end{equation} 
 
 For a real number $A$.
 \quad
 
 For (4.28) a possible solution for (4.25) is
 
 \begin{equation}
 	u(\tau, t, r, \theta, \phi)=E_{\beta,1}(A(f(\tau))^{\beta})\Big(t+\frac{1}{2GM}\ln\Big(\frac{r}{2GM-c^{2}r}\Big)+\ln \tan \frac{\theta}{2}+\phi+4\Big).
 \end{equation}

\end{example}
\quad

In the latter examples it has been implied but still occasionally it should be remembered that the solutions contain logarithmic functions, so if it is appropriate we will restrict the domain of the solution on the varieties where equations (4.13) and (4.25) are defined.
\quad
Let's move on to the final example, which concludes this article.

\begin{example}
	This example starts with a metric obtained from the Ricci-Hamilton equation
	
	\begin{equation}
		\partial_{t}g_{ij}=-2R_{ij}.
	\end{equation}

\quad
\quad

Where a Riemannian metric $g_{ij}$ evolves over time proportionally to the Ricci tensor $R_{ij}$.  In particular, we consider those special metric solutions of (4.30) called $Ricci \hspace{0.2cm} Solitons$, which have the characteristic that they can change their size but their shape remains the same unless diffeomorphisms. Among these solutions we choose this one in particular

\begin{equation}
	ds^{2}=\frac{dx^{2}+dy^{2}}{e^{4t}+x^{2}+y^{2}}.
\end{equation}
Called $Cigar \hspace{0.2cm} Soliton$ (see ch1 of [8] and (1.1.18) for details).
\quad
We skip a few steps, since they are similar to Example 9 to obtain an equation of the type (4.20). Except that the metric depends on time which is the same as equation (4.20), which in this case becomes
\begin{equation}
	\mathcal{O}_{\beta, f}^{t}u-A_{x}(t)\partial_{x}u^{2}-A_{y}(t)\partial_{y}u^{2}=(e^{4t}+x^{2}+y^{2})(\partial_{xx}+\partial_{yy})u+(A_{x}(t)+A_{y}(t))u.
\end{equation}
\end{example}

Where $u=u(t,x,y)$. As before for the following choice

\begin{equation}
A_{x}(t)=A_{y}(t)=\frac{A}{2[1+2E_{\beta,1}(A(f(\tau))^{\beta})]} 
\end{equation} 

for a real number $A$, we get the following possible solutions

\begin{equation}
	u(t,x,y)=E_{\beta,1}(A(f(t))^{\beta})(x+y+2).
\end{equation} 

\quad

\section{Appendix: solving non-linear  partial differential equations using the method of invariant spaces}

 \quad

 We shall commence with an examination of the following vector space on $\mathbb{R}$
 
 $$ W=< f_{0}(x),...,f_{i}(x), ..., f_{n}(x)>=\{ \sum_{i=0}^{n} c_ {i}f_{i}(x) | c_{i} \in  \mathbb{R} \hspace{0.1cm} \forall i       \},$$  
 wherein $f_{i}(x)$ represent numerical functions with values in $\mathbb{R}$.
 In this context, we will say that $W$ is invariant under the action of an appropriate non-linear partial differential operator $L$ if 
 
 \begin{equation} 	
 \forall \hspace{0.1cm} v=\sum_{i=0}^{n} c_{i}f_{i}(x) \in \hspace{0.1cm} W \rightarrow L(v)=\sum_{i=0}^{n} \hat{L}_{i}(c_{0},...,c_{j},...,c_{n})f_{i}(x) \in W .\end{equation} 

These  vector spaces $W$ invariant under appropriate differential operators $L$ can be useful for the identification of potential solutions to differential equations of the type
 \begin{equation}
 	\mathcal{O}_{t}(u)=L(u),
 \end{equation}
 with $u=u(x,t)$ and $\mathcal{O}_{t}$ is a linear differential operator with respect to the variable $t$.
Indeed, the generators of W can be used to construct the following function $$ u(x,t)=\sum_{i=0}^{n}\tilde{u}_{i}(t)f_{i}(x),$$

which is a possible solution of (5.1) as soon as we find the coefficients $\tilde{u}_{i}(t)$, which are solutions of the system of ordinary differential equations with respect to the operator $\mathcal{O}_{t}$ that we are now going to determine.

$$ \mathcal{O}_{t}(\sum_{i=0}^{n}\tilde{u}_{i}(t) f_{i}(x))=L(\sum_{i=0}^{n}\tilde{u}_{i}(t)f_{i}(x))$$

$$\sum_{i=0}^{n}\mathcal{O}_{t}(\tilde{u}_{i}(t)) f_{i}(x)=\sum_{i=0}^{n} \hat{L}_{i}(\tilde{u}_{0},...,\tilde{u}_{j},...,\tilde{u}_{n})f_{i}(x)     $$

\begin{equation}
\sum_{i=0}^{n}(\mathcal{O}_{t}(\tilde{u}_{i}(t))-\hat{L}_{i}(\tilde{u}_{0},...,\tilde{u}_{j},...,\tilde{u}_{n})    )f_{i}(x)=0.
\end{equation}

Then (5.3) is satisfied if the $\tilde{u}_{i}(t)$ are solutions of the following system

\begin{equation}
	\begin{cases}
		\mathcal{O}_{t}(\tilde{u}_{0}(t))=\hat{L}_{0}(\tilde{u}_{0},...,\tilde{u}_{j},...,\tilde{u}_{n}) \\
		.\\
		.\\
		.\\
			\mathcal{O}_{t}(\tilde{u}_{i}(t))=\hat{L}_{i}(\tilde{u}_{0},...,\tilde{u}_{j},...,\tilde{u}_{n})\\
			.\\
			.\\
			.\\
				\mathcal{O}_{t}(\tilde{u}_{n}(t))=\hat{L}_{n}(\tilde{u}_{0},...,\tilde{u}_{j},...,\tilde{u}_{n})\\
	\end{cases}
\end{equation}  

The advantage of this method is evident in its ability to reduce the resolution of a non-linear partial derivative equation (5.1) into an ordinary system of differential equations (5.4).

\end{document}